\documentclass[12pt]{amsart}

\usepackage[foot]{amsaddr}
\usepackage{graphicx}
\usepackage{mathptmx} 
\usepackage{amssymb}
\usepackage{amsmath}
\usepackage{amsthm}
\usepackage{mathtools}
\usepackage[dvipsnames]{xcolor}
\usepackage{marvosym}
\usepackage{a4wide}
\usepackage[normalem]{ulem}
\newcommand{\stkout}[1]{\ifmmode\text{\sout{\ensuremath{#1}}}\else\sout{#1}\fi}
\usepackage{enumerate}
\usepackage{url}
\usepackage{csquotes}

\usepackage[mathscr]{euscript}

\theoremstyle{plain}
\newtheorem{theorem}{Theorem}[section]
\newtheorem{lemma}[theorem]{Lemma} 
\newtheorem{corollary}[theorem]{Corollary}
\newtheorem{proposition}[theorem]{Proposition}

\newtheorem{fact}[theorem]{Fact}

\theoremstyle{definition}

\newcommand{\cat}{\text{CAT}}

\title{EL-Shellability of the poset of ranked cactuses}

\author{Vincent Moulton}
\address[V. Moulton]{School of Computing Sciences, University of East Anglia, UK}
\email{v.moulton@uea.ac.uk}
\author{Andreas Spillner}
\address[A. Spillner]{Merseburg University of Applied Sciences, Germany}
\email{andreas.spillner@hs-merseburg.de}
\author{Antonia Stavemann}
\address[A. Stavemann]{Interdisciplinary Center for Scientific Computing, Heidelberg University + Merseburg University of Applied Sciences, Germany}
\email{antonia.stavemann@hs-merseburg.de}

\keywords{network space, poset, shelling}
\date{\today}

\begin{document}

\begin{abstract}
Recently the \emph{poset of ranked cactuses}~\((\mathfrak{P}(X),\preceq)\) was
introduced. For a finite set $X$, 
this poset consists of a set~\(\mathfrak{P}(X)\)
of certain collections of ordered pairs of subsets of~\(X\) together
with an ordering $\preceq$ that is similar to the refinement ordering of partitions of a finite set. In addition, 
the maximal chains in this poset correspond to
\emph{binary ranked cactuses}, a fact which can be used to
construct the so-called \emph{space of equidistant cactuses}.
In this paper, we show that the poset of ranked cactuses is EL-shellable.
As a consequence we also show that 
the proper part of the link of the origin of the 
space of equidistant cactuses has the homotopy type of a wedge of spheres.
\end{abstract}

\maketitle

\section{Introduction}

Spaces of phylogenetic trees, or \emph{tree spaces}, are 
well-studied objects in phylogenetics~\cite{ard-kli-06a,BHV01,GD16,moulton2004peeling}. The geometry of these spaces is important
because it can be used to define \emph{metrics} on
trees which are useful, e.g., in tree-construction
algorithms or consensus tree approaches \cite{st2017shape}.
Interestingly, the geometry of many of 
these tree spaces can be understood through combinatorial 
properties of associated partially ordered sets of trees or related 
structures; see e.g.~\cite{gil-etal-08a,stadnyk2022edge,tra-zie-98a}. 
For example, given a finite, non-empty set $X$ with~\(n\) elements,
the orthant space of \emph{ultrametric} (or equidistant)
\emph{trees} with leaf set~\(X\) defined in~\cite{GD16} 
(see also \cite{ard-kli-06a}) can be
obtained from the order complex of the poset of partitions
of~\(X\) under refinement. This can be shown
using the fact that the maximal chains in the poset of partitions
of~\(X\) are in one-to-one correspondence with rooted binary
trees with leaf set~\(X\) and whose interior vertices are
ranked (see Figure~\ref{fig:poset:partitions} for 
the case $n=4$).
Moreover, through this connection, the fact
that the space of ultrametric trees is a so-called $\cat(0)$-space~\cite{GD16}  also follows -- see e.g. \cite[Section 4]{huber2024space}. 

\begin{figure}
\centering
\includegraphics[scale=0.88]{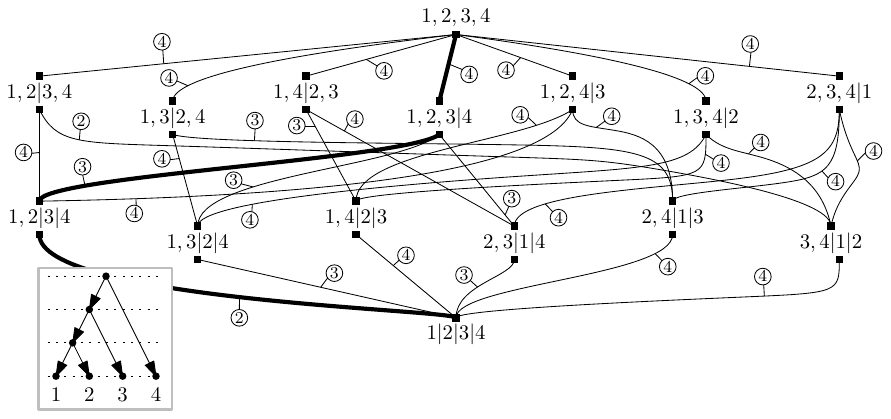}
\caption{The Hasse diagram of the poset of partitions of \(X=\{1,2,3,4\}\).
To avoid expressions of many nested brackets we write, for example,
\(1,3|2|4\) for the partition of~\(X\) into the subsets
\(\{1,3\}\), \(\{2\}\) and \(\{4\}\).
The bold maximal chain corresponds to the binary ranked tree with
leaf set~\(X\) depicted in the gray box. To every edge in the 
Hasse diagram is assigned an integer label which is depicted in
a small circle. The sequence of labels of the bold maximal chain
from bottom to top is \((2,3,4)\). The bold maximal chain is the
only maximal chain with a strictly increasing sequence of labels.
Moreover, this sequence is lexicographically strictly smaller than
the sequence of labels of any other maximal chain.}
\label{fig:poset:partitions}
\end{figure}

Recently, there has been increasing interest in understanding properties of
certain graphs generalizing phylogenetic trees called \emph{phylogenetic
networks} (see e.g. \cite{kong2022classes}). Such
networks permit the representation of evolutionary process such
as hybridization and recombination that do not fit on a tree.
This has naturally led to an emerging theory of \emph{network spaces}, 
continuous spaces of phylogenetic networks that can be thought of as generalizations of tree spaces (see e.g. \cite{DP17a,huber2024space,moulton2025spaces}).
The motivation underlying this paper is to better
understand the space of \emph{equidistant cactuses} with leaf set~\(X\)
introduced in~\cite{huber2024space} and, more specifically,
the \emph{poset of ranked cactuses}~\((\mathfrak{P}(X),\preceq)\) that was employed in that paper to construct equidistant cactus space. 
This poset consists of the set~\(\mathfrak{P}(X)\) of certain
collections of ordered pairs of subsets of~\(X\) together
with an ordering $\preceq$ that is similar
to the refinement ordering of partitions, and
maximal chains in this poset correspond to
binary ranked phylogenetic cactuses
(see Figure~\ref{fig:example:binary:cactus} for an example).
Note that the poset of partitions of~\(X\) is isomorphically embedded
in~\((\mathfrak{P}(X),\preceq)\)~\cite[Cor.~1]{huber2024space}.

In this paper, we shall prove that the poset \((\mathfrak{P}(X),\preceq)\)
is \emph{edge-lexicographic (EL-) shellable}.
Poset shellability is an important property that can be used to determine 
the topology of posets -- see \cite{wachs07} for an overview.
EL-shellability involves showing that there exists an integer labeling~\(\lambda\)
of the edges in the Hasse diagram of a poset such that every
interval of the poset has a unique maximal chain~\(\mathfrak{C}\) 
along which labels are strictly increasing, and such that
the sequence of labels along~\(\mathfrak{C}\)
is lexicographically strictly smaller than the
sequence of labels along any other maximal chain in the
same interval. Note that it is known~\cite[Sec.~3.2.2.]{wachs07}
that the poset of partitions of~\(X\) admits such a labeling
as illustrated in Figure~\ref{fig:poset:partitions}.

\begin{figure}
\centering
\includegraphics[scale=1.0]{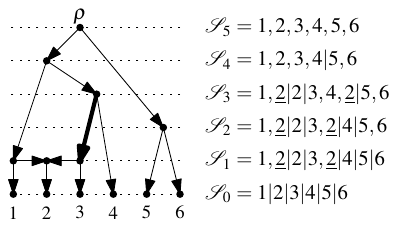}
\caption{A binary ranked cactus with leaf set~\(X=\{1,2,\dots,6\}\).
The dotted horizontal lines indicate the ranks of the vertices.
For each rank~\(i\), \(0 \leq i \leq 5\),
the associated collection \(\mathcal{S}_i\) of ordered pairs of
subsets of~\(X\) is given. 
To avoid expressions of many nested brackets we continue to
use the notation introduced for partitions and mark the elements
in the second entry of an ordered pair of subsets of~\(X\) by
underlining them. The bold arc, for example corresponds to the
ordered pair \((\{3\},\{2\})\) in~\(\mathcal{S}_2\),
which is denoted as~\(3,\underline{2}\).
This ordered pair records the fact that all directed
paths from the root~\(\rho\) to leaf~\(3\) must include the
bold arc whereas leaf~\(2\) can be reached from~\(\rho\)
both by a directed path including the bold arc and by a directed path
avoiding it. The sequence \(\mathcal{S}_0,\mathcal{S}_1,\dots,\mathcal{S}_5\)
then encodes the whole binary ranked cactus.}
\label{fig:example:binary:cactus}
\end{figure} 

The rest of this paper is structured as follows.
In Section~\ref{sec:prelim} we recall some key concepts concerning posets.
Then in Section~\ref{sec:properties:poset} we
formally define the poset~\((\mathfrak{P}(X),\preceq)\)
of ranked cactuses
and, after establishing some of its basic properties, in
Section~\ref{sec:poset:shellable} we show that
this poset is EL-shellable. In Section~\ref{sec:conclusion}
we show that since
\((\mathfrak{P}(X),\preceq)\) is EL-shellable,
it immediately follows that the proper part of 
the link of the origin of the orthant space
of equidistant cactuses has the homotopy type of a wedge of spheres,
thus answering a question posed in~\cite[Section 9]{huber2024space}.
We conclude by pointing out a potential direction for future work
concerning more general ranked networks.

\section{Preliminaries}
\label{sec:prelim}

In this section, we recall some key concepts
related to finite posets and introduce the notation
used for them in this paper. These standard concepts can
be found in e.g. \cite{wachs07}. Let \((M,\leq)\) be a finite poset.
\(m \in M\) is a \emph{maximum} (\emph{minimum})
of \((M,\leq)\) if \(e \leq m\) (\(m \leq e\)) for all \(e \in M\).
Note that, if it exists, the maximum (minimum) of \((M,\leq)\) is unique.
A poset is \emph{bounded} if it has a maximum and a minimum.
The \emph{proper part} \((\overline{M},\leq)\) of
a bounded poset \((M,\leq)\) is formed
by removing the maximum and minimum from~\(M\).

A non-empty subset \(\mathfrak{C} \subseteq M\) is a \emph{chain}
in \((M,\leq)\) if the elements of \(\mathfrak{C}\) can be
numbered such that
\[e_0 \leq e_1 \leq \dots \leq e_k,\]
where \(k = |\mathfrak{C}|-1\) is called the \emph{length} of
the chain. A chain \(\mathfrak{C}\) in \((M,\leq)\) is
\emph{maximal} if \(\mathfrak{C} \cup \{a\}\) is not a chain
for all \(a \in M - \mathfrak{C}\). A bounded poset \((M,\leq)\)
is \emph{graded} (also called \emph{pure})
if all maximal chains in \((M,\leq)\) have
the same length.

An element \(u \in M\) is a \emph{least upper bound} of \(a\) and \(b\)
if \(a \leq u\), \(b \leq u\) and, for all \(u' \in M\)
with \(a \leq u'\) and \(b \leq u'\), \(u \leq u'\).
If \(a\) and \(b\) have a least upper bound it is unique.
The \emph{greatest lower bound} of \(a\) and \(b\) is
defined analogously.
A poset \((M,\leq)\) is a \emph{lattice}
if for all \(a,b \in M\) the least
upper bound and the greatest lower bound exist.

For two distinct
\(a,b \in M\) with \(a \leq b\), the \emph{interval} \([a,b]\) in~\((M,\leq)\)
consists of all \(e \in M\) with \(a \leq e \leq b\). For each 
interval \(\mathfrak{I}\) in~\((M,\leq)\) we will refer to a
chain~\(\mathfrak{C}\) in~\((M,\leq)\) with
\(\mathfrak{C} \subseteq \mathfrak{I}\) 
as a \emph{chain in}~\(\mathfrak{I}\). A chain \(\mathfrak{C}\)
in~\(\mathfrak{I}\) is \emph{maximal in}~\(\mathfrak{I}\) if
\(\mathfrak{C} \cup \{e\}\) is not a chain in~\(\mathfrak{I}\)
for all \(e \in \mathfrak{I} - \mathfrak{C}\).

The \emph{edge set} \(\mathfrak{E}_{(M,\leq)}\) of~\((M,\leq)\)
consists of those ordered pairs \((a,b) \in M \times M\)
with \(|[a,b]|=2\). \(\mathfrak{E}_{(M,\leq)}\) is also referred to as the
\emph{covering relation} of \((M,\leq)\). The elements of
\(\mathfrak{E}_{(M,\leq)}\) are called \emph{edges} of~\((M,\leq)\).
A poset~\((M,\leq)\) is \emph{thin}
if for all edges \((a,b),(b,c) \in \mathfrak{E}_{(M,\leq)}\)
the interval \([a,c]\) contains precisely four elements.

An \emph{edge-labeling}
of \((M,\leq)\) is a map \(\lambda: \mathfrak{E}_{(M,\leq)} \rightarrow \mathbb{N}\). The \emph{label vector} of a chain~\(\mathfrak{C}\)
of length~\(k \geq 1\) in~\((M,\leq)\) with elements
\(e_0 \leq e_1 \leq \dots \leq e_k\) is the vector
\[\Lambda_{\lambda}(\mathfrak{C})
= (\lambda(e_0,e_1),\lambda(e_1,e_2), \dots , \lambda(e_{k-1},e_k))
\in \mathbb{N}^k.\]
A chain~\(\mathfrak{C}\) of length \(k \geq 1\) in \((M,\leq)\)
is \emph{increasing} with respect to an edge-labeling~\(\lambda\)
of~\((M,\leq)\) if the entries of the labeling vector
\(\Lambda_{\lambda}(\mathfrak{C})\) are strictly increasing
from the first to the last entry.
To compare vectors \(u=(u_1,\dots,u_k),v=(v_1,\dots,v_k) \in \mathbb{N}^k\)
we use the \emph{lexicographic ordering}, that is,
\(u <_{\text{lex}} v\) if \(u \neq v\) and
\(u_i < v_i\) for the smallest \(1 \leq i \leq k\) with
\(u_i \neq v_i\).

A finite, bounded and graded poset~\((M,\leq)\) is 
\emph{edge-lexicographic shellable} (EL-shellable, for short;
see \cite[Sec.~3.2]{wachs07})
if there exists an edge-labeling~\(\lambda\) 
of~\((M,\leq)\) such that
\begin{itemize}
\item[(EL1)]
for every interval \(\mathfrak{I}\)
of \((M,\leq)\) with \(|\mathfrak{I}| \geq 2\),
there is precisely one maximal chain
\(\mathfrak{C}_{\mathfrak{I}}\) in \(\mathfrak{I}\) that is increasing
with respect to~\(\lambda\), and 
\item[(EL2)]
the labeling vector \(\Lambda_{\lambda}(\mathfrak{C}_{\mathfrak{I}})\)
is lexicographically 
strictly smaller than the labeling vector of any other
maximal chain in~\(\mathfrak{I}\).
\end{itemize}

Note that EL-shellability of a 
finite, bounded and graded poset~\((M,\leq)\) has important consequences
for its combinatorial and topological properties. In particular,
using the terminology and notation in~\cite[Section 1.1]{wachs07},
EL-shellability of~\((M,\leq)\) implies 
that the \emph{order complex}~$\Delta(\overline{M})$ of its proper part,
that is, the abstract simplicial complex whose vertex
set is~$\overline{M}$ and whose
simplices are the chains in \((\overline{M},\leq)\) 
has the homotopy type of a wedge of spheres~\cite{bjorner1996nonpure}  
(see also~\cite[Sec.~3.1 and~4.1]{wachs07} for some other
interesting consequences of shellability of simplicial complexes).

\section{Basic properties of the poset $(\mathfrak{P}(X),\preceq)$}
\label{sec:properties:poset}

We begin by formally defining
the poset~\((\mathfrak{P}(X),\preceq)\) as introduced in~\cite{huber2024space}.
Let \(X\) be a finite non-empty set and put \(n = |X|\).
A \emph{set pair} on~\(X\) is an ordered pair of subsets of~\(X\).
A \emph{set pair system} on~\(X\)
is a collection of set pairs $(S,H)$ on~\(X\) 
with \(S \neq \emptyset\) and \(S \cap H = \emptyset\).
Without loss of generality
we assume from now on that \(X=\{1,2,\dots,n\}\) and,
when giving examples of set pair systems, we use the
notation introduced in Figure~\ref{fig:example:binary:cactus}
to avoid nested brackets.

For a set pair system~\(\mathcal{S}\) on~\(X\), put
\[\mathcal{P}(\mathcal{S}) = \{S : (S,H) \in \mathcal{S}\}
\quad \text{and} \quad
\mathcal{H}(\mathcal{S}) = \{H : (S,H) \in \mathcal{S}, H \neq \emptyset\}.\]
A set pair system \(\mathcal{S}\) on \(X\) is \emph{partition-like}~if
\begin{itemize}
\setlength{\itemindent}{15pt}
\item[(PL1)]
\(\mathcal{P}(\mathcal{S})\) is a partition of~\(X\),
\item[(PL2)]
for all \((S,H),(S',H') \in \mathcal{S}\) with \((S,H) \neq (S',H')\)
we have \(S \neq S'\), and
\item[(PL3)]
for all \((S,H) \in \mathcal{S}\) with \(H \neq \emptyset\) we have
\((H,\emptyset) \in \mathcal{S}\) and
there exists precisely one \((\overline{S},\overline{H}) \in \mathcal{S}\)
with \((\overline{S},\overline{H}) \neq (S,H)\) and \(H=\overline{H}\).
\end{itemize}
Note that~(PL2) implies that $\lvert \mathcal{S} \rvert = \lvert \mathcal{P}(\mathcal{S}) \rvert$.
We denote by~\(\mathfrak{P}(X)\) the set of
partition-like set pair systems on the set~\(X\).
In addition, for all \(\mathcal{S} \in \mathfrak{P}(X)\) and
all \((S,H) \in \mathcal{S}\) with \(H \neq \emptyset\),
we denote by \((\overline{S},H)\) the unique
set pair in~\(\mathcal{S}-\{(S,H)\}\) with second entry~\(H\)
that must exist by~(PL3). 

To define the partial ordering on~\(\mathfrak{P}(X)\),
we first put \((S_1,H_1) \leq (S_2,H_2)\) for
set pairs \((S_1,H_1)\) and  \((S_2,H_2)\) on~\(X\)
if either \((S_1,H_1) = (S_2,H_2)\) or \((S_1,H_1) \neq (S_2,H_2)\) and one the
following holds:
\begin{itemize}
\item
\(S_1 \cup H_1 \subseteq S_2\)
\item
\(S_1 \subsetneq S_2\) and \(H_1 = H_2 \neq \emptyset\)
\end{itemize}
Moreover, we write \((S_1,H_1) < (S_2,H_2)\) if \((S_1,H_1) \leq (S_2,H_2)\)
and the set pairs \((S_1,H_1)\) and \((S_2,H_2)\) are distinct.
Note that in~\cite{huber2024space} the definition of~\(\leq\)
for set pairs is slightly more general. We can simplify it here
because in the following we only apply it to partition-like set pair systems.
For \(\mathcal{S}_1,\mathcal{S}_2 \in \mathfrak{P}(X)\)
we put \(\mathcal{S}_1 \preceq \mathcal{S}_2\) if 
\begin{itemize}
\setlength{\itemindent}{15pt}
\item[(SP1)]
for all \((S_1,H_1) \in \mathcal{S}_1\) there exists some
\((S_2,H_2) \in \mathcal{S}_2\) with \((S_1,H_1) \leq (S_2,H_2)\), and
\item[(SP2)]
for all \((S_2,H_2) \in \mathcal{S}_2\) with \(H_2 \neq \emptyset\),
if there exists some \((S_1,H_1) \in \mathcal{S}_1\) with
\(H_1 = H_2\), then there exists such a \((S_1,H_1)\) with
\((S_1,H_1) \leq (S_2,H_2)\).
\end{itemize}
Again, we write \(\mathcal{S}_1 \prec \mathcal{S}_2\) if
\(\mathcal{S}_1 \preceq \mathcal{S}_2\)
and~\(\mathcal{S}_1 \neq \mathcal{S}_2\).
We remark that~(SP2) is a technical requirement that ensures
that elements cannot switch between \(S\) and \(\overline{S}\)
in two set pairs with the same non-empty second entry~\(H\) as
in \(\mathcal{S}_1 = 1,\underline{2}|2|3,\underline{2}|4\) and
\(\mathcal{S}_2 = 1,3,\underline{2}|2|4,\underline{2}\),
which satisfy~(SP1) but not~(SP2).

It is shown in~\cite[Prop.~1]{huber2024space} that
\((\mathfrak{P}(X),\preceq)\) is a bounded graded poset.
The minimum of \((\mathfrak{P}(X),\preceq)\) is
\(\{(\{x\},\emptyset): x \in X\}\) and the 
maximum is \(\{(X,\emptyset)\}\). All maximal chains
in~\((\mathfrak{P}(X),\preceq)\) have length~\(n-1\).
As an example, we consider the collection
\(\{\mathcal{S}_0,\mathcal{S}_1,\dots,\mathcal{S}_5\}\)
of set pair systems given in Figure~\ref{fig:example:binary:cactus}.
This collection is a maximal chain
in~\((\mathfrak{P}(\{1,\dots,6\}),\preceq)\).

As illustrated in Figure~\ref{fig:poset:partitions},
in the poset of partitions of~\(X\) edges correspond
to unions of precisely two disjoint subsets of~\(X\) in a partition of~\(X\)
to form a coarser partition of~\(X\).
Intuitively, the poset~\((\mathfrak{P}(X),\preceq)\)
allows to postpone such pairwise unions and mark the
involved subsets by forming set pairs with a non-empty
second entry. Next we capture this intuition more precisely for later use
(cf.~\cite[Lem.~6]{huber2024space}):

\begin{fact}
\label{fact:edges:p:x}
Let \((\mathcal{S},\mathcal{S}')\) be an edge
of~\((\mathfrak{P}(X),\preceq)\). Then precisely one of the
following must hold:
\begin{itemize}
\item[(1)]
There exist three pairwise distinct
\((S_1,H_1),(S_2,H_2),(S_3,H_3) \in \mathcal{S}\)
with \(H_1=H_2=S_3\) and \(H_3=\emptyset\)
such that
\[\mathcal{S}' = (\mathcal{S} - \{(S_1,H_1),(S_2,H_2),(S_3,H_3) \})
\cup \{(S_1 \cup S_2 \cup S_3,\emptyset)\}.\]
Intuitively, this performs the postponed union of~\(S_1\) and~\(S_2\)
that was marked with~\(S_3\).
We denote this by \(\mathcal{S} \vdash_{(1)} \mathcal{S}'\).
\item[(2)]
There exist two distinct \((S_1,H_1),(S_2,H_2) \in \mathcal{S}\)
with \(H_1=\emptyset\), \(H_2 \neq \emptyset\) and 
\((S_1 \cup S_2) \cap H_3 = \emptyset\) for all
\((S_3,H_3) \in \mathcal{S}\)
such that
\[\mathcal{S}' = (\mathcal{S} - \{(S_1,H_1),(S_2,H_2)\})
\cup \{(S_1 \cup S_2,H_2)\}.\]
We denote this by \(\mathcal{S} \vdash_{(2)} \mathcal{S}'\).
\item[(3)]
There exist two distinct \((S_1,H_1),(S_2,H_2) \in \mathcal{S}\)
with \(H_1=H_2=\emptyset\) and 
\((S_1 \cup S_2) \cap H_3 = \emptyset\) for all
\((S_3,H_3) \in \mathcal{S}\)
such that
\[\mathcal{S}' = (\mathcal{S} - \{(S_1,H_1),(S_2,H_2)\})
\cup \{(S_1 \cup S_2,\emptyset)\}.\]
We denote this by \(\mathcal{S} \vdash_{(3)} \mathcal{S}'\).
\item[(4)]
There exist three pairwise distinct
\((S_1,H_1),(S_2,H_2),(S_3,H_3) \in \mathcal{S}\)
with \(H_1=H_2=H_3=\emptyset\)
and \((S_1 \cup S_2 \cup S_3) \cap H_4 = \emptyset\) for all
\((S_4,H_4) \in \mathcal{S}\)
such that
\[\mathcal{S}' = (\mathcal{S} - \{(S_1,H_1),(S_2,H_2)\})
\cup \{(S_1,S_3),(S_2,S_3)\}.\]
Intuitively, this postpones the union of~\(S_1\) and~\(S_2\),
marking them with~\(S_3\).
We denote this by \(\mathcal{S} \vdash_{(4)} \mathcal{S}'\).
\end{itemize}
In addition, we write \(\mathcal{S} \vdash \mathcal{S}'\)
if \(\mathcal{S} \vdash_{(i)} \mathcal{S}'\) for some
\(i \in \{1,2,3,4\}\).
\end{fact}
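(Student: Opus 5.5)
The plan is to prove Fact~\ref{fact:edges:p:x} in two parts. First, each of the four operations, applied to a partition-like \(\mathcal{S}\), yields a partition-like \(\mathcal{S}'\) with \(\mathcal{S} \prec \mathcal{S}'\). Second, every cover relation \(\mathcal{S} \prec \mathcal{S}'\) arises from exactly one of them. Both parts rest on the rank function. Since \((\mathfrak{P}(X),\preceq)\) is bounded and graded with maximal chains of length \(n-1\) \cite[Prop.~1]{huber2024space}, I expect the rank of \(\mathcal{S}\) to be
\[r(\mathcal{S}) = n - |\mathcal{P}(\mathcal{S})| + |\mathcal{H}(\mathcal{S})|.\]
Each postponed union is counted once through its marker \(H\). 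I would confirm this formula against the minimum and the maximum, and against the chain in Figure~\ref{fig:example:binary:cactus}.

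For the first part, I check (PL1)--(PL3) for \(\mathcal{S}'\) case by case. In (1), the marker \(S_3\) and both set pairs it marks disappear, so (PL3) survives. In (2) and (4), the disjointness hypotheses \((S_1\cup S_2)\cap H_3=\emptyset\) guarantee that no existing marker is destroyed. Next I check (SP1) and (SP2) directly from the definition of \(\leq\) on set pairs. In (2), for example, the pair \((S_2,H_2)\) satisfies \((S_2,H_2)\le(S_1\cup S_2,H_2)\) by the second bullet, and \((S_1,\emptyset)\le(S_1\cup S_2,H_2)\) by the first bullet. Each operation raises \(r\) by exactly one:
\begin{itemize}
\item in (1), the number of blocks drops by \(2\) and the number of markers by \(1\);
\item in (2) and (3), the number of blocks drops by \(1\);
\item in (4), the number of markers rises by \(1\).
\end{itemize}
Gradedness then forces each such pair to be an edge.

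For the converse, let \((\mathcal{S},\mathcal{S}')\) be an edge, so \(r(\mathcal{S}')=r(\mathcal{S})+1\). By (SP1), every block of \(\mathcal{P}(\mathcal{S})\) lies inside a block of \(\mathcal{P}(\mathcal{S}')\), with \(S\cup H\) contained in that block whenever the first bullet applies. I would split on which blocks of \(\mathcal{S}'\) are not blocks of \(\mathcal{S}\), and on how markers appear or vanish, using (SP2) to exclude elements switching between \(S\) and \(\overline S\).

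The main obstacle is this converse. The rank equation alone allows several combinations, such as merging two blocks while also creating a marker. The way around this is to exhibit, in each mixed case, an intermediate partition-like system strictly between \(\mathcal{S}\) and \(\mathcal{S}'\), obtained by performing one of the operations (1)--(4) first. That contradicts \(|[\mathcal{S},\mathcal{S}']|=2\), and only the four pure cases remain. Exclusivity is immediate: the four operations change the pair (number of blocks, number of markers) by \((-2,-1)\), \((-1,0)\), \((-1,0)\) and \((0,+1)\) respectively. Operations (2) and (3), which give the same change, are distinguished by whether the merged set pair has \(H_2\neq\emptyset\) or \(H_2=\emptyset\). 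Alternatively, the whole statement can be cited from \cite[Lem.~6]{huber2024space}, which the paper references for exactly this characterization.
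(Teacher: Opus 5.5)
The paper gives no argument for this Fact; it simply imports it from \cite[Lem.~6]{huber2024space}. Your closing fallback is therefore exactly the paper's route. Your self-contained argument, however, has a gap at its base.

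\textbf{The rank formula is assumed, not proved.} Everything depends on \(r(\mathcal{S}) = n-|\mathcal{P}(\mathcal{S})|+|\mathcal{H}(\mathcal{S})|\) being the rank function. Checking it at the minimum, the maximum and along one chain does not establish this. Your forward direction (``gradedness forces each such pair to be an edge'') needs that nothing lies strictly between \(\mathcal{S}\) and the result of an operation. Your converse needs that every edge raises \(r\) by exactly one.

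Both follow from the lemma you are missing: \(\mathcal{S}\prec\mathcal{S}'\) implies \(r(\mathcal{S})<r(\mathcal{S}')\). For the converse, combine this with \cite[Prop.~1]{huber2024space}: along a maximal chain \(r\) then climbs strictly from \(0\) to \(n-1\) in \(n-1\) steps, so each step is exactly one. The lemma follows from (SP1) and (PL1)--(PL3) in three steps:
\begin{enumerate}
\item Every block of \(\mathcal{S}\) lies in a block of \(\mathcal{S}'\).
\item A marker \(H\in\mathcal{H}(\mathcal{S})-\mathcal{H}(\mathcal{S}')\) forces \(S\cup\overline{S}\cup H\) into a single block of \(\mathcal{S}'\). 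This is the argument of Phase~1 in Lemma~\ref{lem:max:chain:sorted:by:types}.
\item These triples are pairwise disjoint.
\end{enumerate}
Let \(d\) be the drop in the number of blocks, \(v\) the number of vanished markers and \(w\) the number of new markers. Then \(d\ge 2v\), so \(r(\mathcal{S}')-r(\mathcal{S})=d-v+w\ge v+w\). If the difference is \(0\), then \(d=v=w=0\). In that case (PL3) forces \(\mathcal{S}=\mathcal{S}'\): the only possible change, a pair \((C,\emptyset)\) becoming \((C,K)\), would produce three pairs with second entry \(K\) in \(\mathcal{S}'\).

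\textbf{The converse is still only a plan.} The same inequality shows that the ``mixed cases'' you intend to defeat with intermediate elements never occur. For an edge, \((d,v,w)\) must be \((2,1,0)\), \((1,0,0)\) or \((0,0,1)\); your example of merging two blocks while creating a marker raises \(r\) by \(2\). The real work is structural: showing that within each pattern \(\mathcal{S}'\) has exactly the form (1)--(4).

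For the pattern \((1,0,0)\) this genuinely requires (SP2). Take \(\mathcal{S}=1,\underline{4}|2,\underline{4}|3|4\) and \(\mathcal{S}'=1,2,\underline{4}|3,\underline{4}|4\). Here (SP1) holds and \(r\) rises by one, yet \(\mathcal{S}'\) is not obtained from \(\mathcal{S}\) by (2) or (3); only (SP2) excludes this pair. Your sketch names (SP2) for this purpose but does not carry out the case analysis.

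Your checks that the four operations produce partition-like systems above \(\mathcal{S}\) are fine. So is exclusivity via the change in (number of blocks, number of markers), together with the \(H_2\neq\emptyset\) versus \(H_2=\emptyset\) distinction between (2) and (3).
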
  

Note that \(\vdash\) is the covering relation
of~\((\mathfrak{P}(X),\preceq)\).
To illustrate~Fact~\ref{fact:edges:p:x}, we consider again
the maximal chain in~\((\mathfrak{P}(\{1,\dots,6\}),\preceq)\)
given in Figure~\ref{fig:example:binary:cactus} for which we have
\[\mathcal{S}_0 \vdash_{(4)} \mathcal{S}_1
\vdash_{(3)} \mathcal{S}_2
\vdash_{(2)} \mathcal{S}_3
\vdash_{(1)} \mathcal{S}_4
\vdash_{(3)} \mathcal{S}_5.\]

In Section~\ref{sec:poset:shellable} we will construct an
edge-labeling of~\((\mathfrak{P}(X),\preceq)\)
that relies on Fact~\ref{fact:edges:p:x}.
We conclude this section looking at two properties 
of the poset~\((\mathfrak{P}(X),\preceq)\). First note that,
in contrast to the \emph{Tuffley poset} considered
in~\cite{moulton2004peeling} which is related to
trees with leaf set~\(X\), the poset~\((\mathfrak{P}(X),\preceq)\)
is not thin, since the poset of partitions of~\(X\),
which is isomorphically embedded in~\((\mathfrak{P}(X),\preceq)\),
is known to be not thin (as can be seen in the example in
Figure~\ref{fig:poset:partitions}).
Moreover, in contrast to the poset of partitions of~\(X\),
\((\mathfrak{P}(X),\preceq)\) is, in general, not a lattice.

\begin{proposition}
\label{prop:not:a:lattice}
For \(n \geq 5\) the poset
\((\mathfrak{P}(X),\preceq)\) is not a lattice.
\end{proposition}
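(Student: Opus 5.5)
The plan is to exhibit, for every \(n \geq 5\), two elements \(\mathcal{A},\mathcal{B} \in \mathfrak{P}(X)\) that have two distinct \emph{minimal} common upper bounds \(\mathcal{U}_1 \neq \mathcal{U}_2\). Then no least upper bound can exist, since it would have to lie below both \(\mathcal{U}_1\) and \(\mathcal{U}_2\) and hence coincide with each of them. Dually, it would also suffice to exhibit two elements with two distinct maximal common lower bounds. I will look for whichever configuration is easier to certify.

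First I handle \(n=5\) by finding a concrete configuration. I work in low ranks, where the upper sets can be enumerated by hand using the covering relation \(\vdash\) of Fact~\ref{fact:edges:p:x}. The heuristic is that the marking step \(\vdash_{(4)}\) and the resolving step \(\vdash_{(1)}\) create "diamonds" that do not exist in the partition lattice. Concretely, I take \(\mathcal{A}\) and \(\mathcal{B}\) to be two distinct elements of rank \(1\) or \(2\), each obtained from the minimum by a \(\vdash_{(4)}\) step, possibly followed by one \(\vdash_{(3)}\) step. I will try markers using the elements \(3,4,5\) with \(1,2\) as the postponed pair, and with \(5\) as a spare element. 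I then search for two elements \(\mathcal{U}_1,\mathcal{U}_2\) of the next rank or two that both satisfy (SP1) and (SP2) with respect to \(\mathcal{A}\) and \(\mathcal{B}\). Typical candidates are one in which the postponed union has been resolved via \(\vdash_{(1)}\), and one in which the marked pairs have been enlarged via \(\vdash_{(2)}\) by absorbing the spare element.

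For the chosen candidates I check \(\mathcal{A},\mathcal{B} \preceq \mathcal{U}_i\) directly from (SP1) and (SP2). This is routine once the systems are written down. Minimality of each \(\mathcal{U}_i\) among common upper bounds is also checked directly. Since the poset is graded (\cite[Prop.~1]{huber2024space}), it suffices to examine the elements of lower rank lying below \(\mathcal{U}_i\), that is, the lower covers reachable by reversing a \(\vdash\) step, and to show that none of them lies above both \(\mathcal{A}\) and \(\mathcal{B}\).

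To pass to general \(n \geq 5\), I pad every system with the singleton pairs \((\{x\},\emptyset)\) for \(6 \leq x \leq n\). Let \(\mathcal{A}',\mathcal{B}',\mathcal{U}_1',\mathcal{U}_2'\) denote the padded systems. The order relations \(\mathcal{A}' \preceq \mathcal{U}_i'\) and \(\mathcal{B}' \preceq \mathcal{U}_i'\) persist, because (SP1) and (SP2) are checked pair by pair. For minimality, suppose a padded common upper bound \(\mathcal{V} \preceq \mathcal{U}_i'\). Every pair of \(\mathcal{U}_i'\) containing some \(x \geq 6\) is the singleton \((\{x\},\emptyset)\), so (SP1) and the set-pair order force \(\mathcal{V}\) to contain the same singletons. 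The remainder of \(\mathcal{V}\) then lies in \(\mathfrak{P}(\{1,\dots,5\})\), which reduces the question to the case \(n=5\).

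The main obstacle is the first step: actually locating a pair \(\mathcal{A},\mathcal{B}\) whose common upper bounds lack a minimum. Natural first guesses can fail. For example, \(1,\underline{3}|2,\underline{3}|3|4|5\) and \(1,\underline{4}|2,\underline{4}|3|4|5\) seem to have the join \(1,2,3,4|5\), because (SP1) with \(S_1\cup H_1 \subseteq S_2\) is very restrictive. So I expect to do a small systematic search. This is feasible because of Fact~\ref{fact:edges:p:x}: I would enumerate the up-sets of rank-\(1\) and rank-\(2\) elements in \(\mathfrak{P}(\{1,\dots,5\})\). If upper bounds keep turning out to have a minimum, I switch to the dual search for two elements with two maximal common lower bounds. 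This should be easier, because the \(\vdash_{(4)}\) and \(\vdash_{(2)}\) steps give a single element many structurally different lower covers.
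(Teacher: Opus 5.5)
Your overall strategy is the same as the paper's: for $n=5$, exhibit two elements with two distinct minimal common upper bounds, then pad to larger $n$. Your padding by singletons $(\{x\},\emptyset)$, $x\geq 6$, is a valid alternative to the paper's padding, which instead enlarges the marker $\{5\}$ to $\{5,\dots,n\}$. You need one extra observation there: no $x\geq 6$ can lie in a second entry of $\mathcal{V}$. Indeed, if $x\in H$ then $(H,\emptyset)\in\mathcal{V}$ forces $H=\{x\}$, and no pair of $\mathcal{U}_i'$ has second entry $\{x\}$.

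The genuine gap is that you never produce $\mathcal{A}$ and $\mathcal{B}$. For this statement the explicit counterexample \emph{is} the proof, so a search plan with a fallback to the dual problem establishes nothing. Moreover, the heuristic you describe does not lead to a counterexample. Suppose both systems postpone the same pair $\{1,2\}$ under different markers. Then (SP1) and (SP2) force $1$, $2$ and both markers into a single block of every common upper bound, and the common upper bounds then have a least element. This is exactly what happens in your own trial $1,\underline{3}|2,\underline{3}|3|4|5$ and $1,\underline{4}|2,\underline{4}|3|4|5$.

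The missing idea is to keep a \emph{single} marker and postpone two \emph{disjoint} pairs under it. Then the four postponed elements can be distributed over the two marked blocks in two incompatible ways. The paper takes $\mathcal{A}=1|2,\underline{5}|5|3,\underline{5}|4$ and $\mathcal{B}=2|1,\underline{5}|5|4,\underline{5}|3$.

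Let $\mathcal{V}$ be a common upper bound that still has $\{5\}$ as a marker. Apply (SP2) to the two pairs of $\mathcal{V}$ with second entry $\{5\}$: with respect to $\mathcal{A}$ this separates $2$ from $3$, and with respect to $\mathcal{B}$ it separates $1$ from $4$. Hence $\mathcal{V}$ is either $1,2,\underline{5}|5|3,4,\underline{5}$ or $1,3,\underline{5}|5|2,4,\underline{5}$. Both really are above $\mathcal{A}$ and $\mathcal{B}$, since each is reached from either of them by two $\vdash_{(2)}$ steps.

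Now let $\mathcal{V}$ be a common upper bound in which $\{5\}$ is not a marker. Apply (SP1) to the four marked pairs $(\{2\},\{5\})$, $(\{3\},\{5\})$, $(\{1\},\{5\})$, $(\{4\},\{5\})$. Each must lie in a block containing $5$, so $1,\dots,5$ form one block and $\mathcal{V}$ is the maximum.

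So there are exactly two minimal common upper bounds and no join. The paper certifies the same fact by checking that $\mathcal{A}$ and $\mathcal{B}$ have no common upper cover. This construction also explains the hypothesis $n\geq 5$: you need two disjoint postponed pairs plus a marker.
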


\begin{proof}
For \(n=5\) consider the set pair systems
\[\mathcal{S} = 1|2,\underline{5}|5|3,\underline{5}|4 
\quad \text{and} \quad
\mathcal{S}' = 2|1,\underline{5}|5|4,\underline{5}|3\]
in \(\mathfrak{P}(X)\).
Put
\[\mathcal{S}_1'' = 1,2,\underline{5}|5|3,4,\underline{5}
\quad \text{and} \quad
\mathcal{S}_2'' = 1,3,\underline{5}|5|2,4,\underline{5}.\]
Then \(\mathcal{S}''_1,\mathcal{S}''_2 \in \mathfrak{P}(X)\)
with \(\mathcal{S} \preceq \mathcal{S}_1''\) and
\(\mathcal{S}' \preceq \mathcal{S}_2''\).
Moreover, it can be checked that there is
no \(\mathcal{S}'' \in \mathfrak{P}(X)\) with
\(\mathcal{S} \vdash \mathcal{S}''\) and
\(\mathcal{S}' \vdash \mathcal{S}''\).
Hence, \(\mathcal{S}\) and \(\mathcal{S}'\) don't have
a least upper bound, implying that
\((\mathfrak{P}(X),\preceq)\) cannot be a lattice.

The counterexample can be generalized to \(n \geq 6\)
by adding the elements \(6,\dots,n\) to those sets
in the counterexample above that contain~5.
\end{proof}

We remark that for \(n \leq 4\) the poset \((\mathfrak{P}(X),\preceq)\) is a lattice. For \(n \in \{2,3\}\) this follows immediately from the
fact that the poset is bounded and all maximal chains have length~\(n-1\).
For \(n=4\) it follows by a case analysis on the structure of those set pair systems \(\mathcal{S} \in \mathfrak{P}(X)\) with 
\(\{(\{x\},\emptyset):x \in X\} \vdash \mathcal{S}\).

\section{Shellability of the poset $(\mathfrak{P}(X),\preceq)$}
\label{sec:poset:shellable}

We now show that the poset of ranked cactuses is EL-shellable.
We first establish a technical lemma that is based on
Fact~\ref{fact:edges:p:x} and which is illustrated in
Figure~\ref{fig:mc:property}.

\begin{lemma}
\label{lem:max:chain:sorted:by:types}
Let \(\mathcal{S}, \mathcal{S}' \in \mathfrak{P}(X)\) 
with \(\mathcal{S} \prec \mathcal{S}'\). Then there
exists a maximal chain \(\mathfrak{C}\) in
\([\mathcal{S},\mathcal{S}']\) with the following
property, where \(k=|\mathfrak{C}|-1\):
\begin{itemize}
\item[(MC)]
The elements of \(\mathfrak{C}\) can be numbered
\(\mathcal{S}_0,\mathcal{S}_1,\dots,\mathcal{S}_k\) such that
\[\mathcal{S}_0 \vdash_{(i_1)} \mathcal{S}_1 \vdash_{(i_2)} \dots
\vdash_{(i_k)} \mathcal{S}_k\]
with \(i_1 \leq i_2 \leq \dots \leq i_k\).
\end{itemize}
Moreover, the sequence \(i_1,i_2,\dots,i_k\) is the same
for all maximal chains in~\([\mathcal{S},\mathcal{S}']\)
satisfying~(MC).
\end{lemma}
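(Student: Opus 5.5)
The plan is to prove existence by a sorting argument on edge types: start from any maximal chain in \([\mathcal{S},\mathcal{S}']\) and repeatedly swap adjacent steps that are out of order. Uniqueness will come from identifying invariants of the interval that determine how many steps of each type any chain must contain.

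\textbf{Existence via local swaps.} Let \(\mathcal{A} \vdash_{(i)} \mathcal{B} \vdash_{(j)} \mathcal{C}\) be two consecutive covers in a maximal chain with \(i > j\). Using Fact~\ref{fact:edges:p:x}, I would show that there is some \(\mathcal{B}' \in \mathfrak{P}(X)\) with \(\mathcal{A} \vdash_{(j)} \mathcal{B}' \vdash_{(i)} \mathcal{C}\). This is a case analysis over the six pairs \((i,j)\) with \(i>j\).

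The mechanism is as follows. The second step only involves set pairs of \(\mathcal{B}\). Either these were already present in \(\mathcal{A}\), or they were created by the first step.
\begin{itemize}
\item If they were already present, the two operations act on disjoint parts and commute. One must still check the side conditions, namely disjointness from the second entries \(H\), (PL3), and (SP2).
\item If they were created by the first step, the composite operation can be re-factored.
\end{itemize}
Some examples of re-factoring:
\begin{itemize}
\item A type (4) step followed by a type (1) step on the same triple collapses to three merges. This would change the chain length, which is impossible for covers in a graded poset, so such configurations must be ruled out or handled by showing they cannot occur.
\item A type (3) step creating \((S_1\cup S_2,\emptyset)\), followed by a type (2) step absorbing it into a marked set \((S_4,H)\), can be replaced by first absorbing \(S_1\) into \(S_4\) (type (2)) and then \(S_2\) (type (2)). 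This preserves the number of steps but changes the types, so the swap is not literally type-preserving. I would instead allow the replacement sequence to be of nondecreasing type, with a different multiset of types.
\end{itemize}

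Because of this, it is cleaner to organize existence by induction on the length of the interval and construct the chain greedily. From \(\mathcal{S}\), perform the smallest-type cover \(\mathcal{S} \vdash_{(i)} \mathcal{T}\) with \(\mathcal{T} \preceq \mathcal{S}'\), then recurse on \([\mathcal{T},\mathcal{S}']\). The key claim is then a monotonicity statement: if the minimal available type at \(\mathcal{S}\) is \(i\), then every cover \(\mathcal{T} \vdash \mathcal{T}'\) in \([\mathcal{T},\mathcal{S}']\) starting at \(\mathcal{T}\) has type at least \(i\). I would prove this by the commutation analysis above. A type-\(j\) step with \(j<i\) available after the type-\(i\) step was, in each case, already available before it, using \(\mathcal{T} \preceq \mathcal{S}'\) and (SP2).

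\textbf{Uniqueness via type counts.} In an (MC) chain the sequence is determined by the number \(k_t\) of steps of each type \(t\). I would compute these counts from invariants of \(\mathcal{S}\) and \(\mathcal{S}'\), in the order they are executed:
\begin{itemize}
\item \(k_1\) equals the number of marks (pairs \(\{(S_1,H),(\overline{S_1},H)\}\)) present in \(\mathcal{S}\) but not continued in \(\mathcal{S}'\). The ordering forces type-(1) steps first, so these resolve all such marks, and an ordered chain cannot create and then resolve a mark.
\item \(k_2\) is the number of blocks with empty second entry absorbed into marked sets surviving to \(\mathcal{S}'\).
\item \(k_4\) is the number of new marks in \(\mathcal{S}'\).
\item \(k_3\) is determined by the total length, since the interval is graded.
\end{itemize}
Each of these quantities is expressed purely via the block structures of \(\mathcal{S}\) and \(\mathcal{S}'\).

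The main obstacle is the monotonicity and commutation case analysis. The delicate cases are those where a later step uses a set pair created by an earlier one: type (3) followed by (2), type (4) followed by (1), and type (2) followed by (1). The side conditions in (SP2) and (PL3) must be verified there.
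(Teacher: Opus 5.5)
\textbf{There is a genuine gap in the existence half.} Your uniqueness argument is essentially the paper's: you compute $k_1,k_2,k_4$ from $\mathcal{S},\mathcal{S}'$ and get $k_3$ from gradedness. It needs one small addition, which is routine. Once a mark $H$ is resolved, $H$ is a proper subset of a block and can never again be a mark, so marks in $\mathcal{H}(\mathcal{S})\cap\mathcal{H}(\mathcal{S}')$ are never resolved.

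Existence rests entirely on your monotonicity claim, which you announce but do not prove. The mechanism you offer for it is false as stated. You say that a type-$j$ step with $j<i$ available after the type-$i$ step ``was already available before it''. Two cases break this:
\begin{itemize}
\item After a type (3) step producing $(S_1\cup S_2,\emptyset)$, absorbing that block by a type (2) step was not available beforehand.
\item After a type (4) step creating a mark $H$, resolving $H$ by a type (1) step was not available beforehand.
\end{itemize}
Your analysis of the second case is also wrong. A type (4) step followed by a type (1) step on the same triple has the net effect of \emph{two} type (3) merges, not three. So it does not change the length and cannot be ruled out. It occurs already for $n=3$: $1|2|3 \vdash_{(4)} 1,\underline{3}|2,\underline{3}|3 \vdash_{(1)} 1,2,3$ is a maximal chain of $\mathfrak{P}(\{1,2,3\})$.

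\textbf{What is missing is comparison with the target $\mathcal{S}'$.} In the (4)--(1) case, suppose the type (4) step turned free blocks $S,\overline{S},H$ into a mark and resolving $H$ stays below $\mathcal{S}'$. Then $S\cup\overline{S}\cup H$ lies in one block of $\mathcal{S}'$. So the type (3) merge of $S$ and $\overline{S}$ was already available at the start, contradicting minimality of $i=4$. In the (3)--(2) case, absorbing $S_1$ alone was available, again with (SP1) and (SP2) to be checked.

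The paper avoids this case-by-case work. It runs four phases driven by explicit conditions relative to $\mathcal{S}'$, and checks with (PL1)--(PL3) and (SP2) that each step stays below $\mathcal{S}'$. After each phase an invariant holds:
\begin{itemize}
\item after Phase 1, $\mathcal{H}(\mathcal{S}_\ell)\subseteq\mathcal{H}(\mathcal{S}')$;
\item after Phase 2, the pairs of $\mathcal{S}_\ell$ with non-empty second entry are pairs of $\mathcal{S}'$;
\item after Phase 3, $\mathcal{P}(\mathcal{S}_\ell)=\mathcal{P}(\mathcal{S}')$.
\end{itemize}
Each invariant is preserved by all later phases, so earlier phase conditions never become true again and the types never decrease.

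Your greedy ``minimal available type'' chain is in fact the same chain. However, existence is not established until you supply either these availability characterizations relative to $\mathcal{S}'$ or the full case analysis with the (SP1)/(SP2) checks.
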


\begin{proof}
First we show that there exists a maximal
chain~\(\mathfrak{C}\) in~\([\mathcal{S},\mathcal{S}']\)
satisfying~(MC).
Put \(\mathcal{S}_0 = \mathcal{S}\). For \(1 \leq \ell \leq k\) 
we describe how to construct a suitable set pair system \(\mathcal{S}_{\ell}\)
from \(\mathcal{S}_{\ell-1}\), assuming that
\(\mathcal{S}_{\ell-1} \preceq \mathcal{S}'\).
The construction of~\(\mathfrak{C}\) proceeds in four phases,
where we switch to
the next phase once the condition considered in the current phase
is not (or no longer) met and we never return to a previous phase.\\

\noindent  \underline{Phase 1}: There exists \(H \in \mathcal{H}(\mathcal{S}_{\ell-1}) - \mathcal{H}(\mathcal{S}')\). Let \((S,H),(\overline{S},H),(H,\emptyset) \in \mathcal{S}_{\ell-1}\)
be the three set pairs with one entry equal to~\(H\) that must exist by
the definition of \(\mathcal{H}(\mathcal{S}_{\ell-1})\) and~(PL3).
Put
\[\mathcal{S}_{\ell-1} \vdash_{(1)} \mathcal{S}_{\ell}
= (\mathcal{S}_{\ell-1} - \{(S,H),(\overline{S},H),(H,\emptyset)\})
\cup \{(S \cup \overline{S} \cup H,\emptyset)\}.\]
Since \(H \not \in \mathcal{H}(\mathcal{S}')\) and
\(\mathcal{S}_{\ell-1} \prec \mathcal{S}'\) there must exist
\((S_1,H_1) \in \mathcal{S}'\) with \(S \cup H \subseteq S_1\)
and \((S_2,H_2) \in \mathcal{S}'\) with \(\overline{S} \cup H \subseteq S_2\).
By~(PL1) and the fact that \(H \neq \emptyset\), we have
\(S_1=S_2\) and, thus, \(S \cup \overline{S} \cup H \subseteq S_1\).
Hence, \(\mathcal{S}_{\ell} \preceq \mathcal{S}'\), as required.

At the end of Phase~1, the resulting set pair system
\(\mathcal{S}_{\ell}\) satisfies 
\(\mathcal{H}(\mathcal{S}_{\ell}) \subseteq \mathcal{H}(\mathcal{S}')\)
and this property is maintained through all subsequent phases.\\

\noindent  \underline{Phase 2}: There exist \(H \in \mathcal{H}(\mathcal{S}_{\ell-1}) \cap \mathcal{H}(\mathcal{S}')\), \((S_1,H) \in \mathcal{S}_{\ell-1}\) and
\((S',H) \in \mathcal{S}'\) such that \(S_1 \subsetneq S'\).
Let \(x \in S' - S_1\). By~(PL1) and~(PL2) there exists a unique
\((S_2,H_2) \in \mathcal{S}_{\ell-1}\) with \(x \in S_2\).
Thus, again by~(PL1) and since \(\mathcal{S}_{\ell-1} \prec \mathcal{S}'\),
we must have \((S_2,H_2) < (S',H)\). Assume for a contradiction
that \(H_2 \neq \emptyset\). If \(H_2 \neq H\) we would have
\(S_2 \cup H_2 \subseteq S'\) and, therefore,
\(H_2 \in \mathcal{H}(\mathcal{S}_{\ell-1}) - \mathcal{H}(\mathcal{S}')\),
contradicting the fact that Phase~1 is completed. 
If \(H_2=H\) consider the set pair \((\overline{S}',H) \in \mathcal{S}'\)
that must exist by~(PL3) and the set pair \((S_3,H) \in \mathcal{S}_{\ell-1}\)
with \((S_3,H) \leq (\overline{S}',H)\) that must exist by~(SP2).
Then \(\mathcal{S}_{\ell-1}\) would contain three set pairs with
second entry~\(H\), contradicting~(PL3).
Hence, \(H_2=\emptyset\). Moreover, again since Phase~1 is completed,
\(S_2 \not \in \mathcal{H}(\mathcal{S}_{\ell-1})\).
We put
\[\mathcal{S}_{\ell-1} \vdash_{(2)} \mathcal{S}_{\ell}
= (\mathcal{S}_{\ell-1} - \{(S_1,H),(S_2,\emptyset)\})
\cup \{(S_1 \cup S_2,H)\}.\]
Since \(S_1 \cup S_2 \subseteq S'\), we
have \(\mathcal{S}_{\ell} \preceq \mathcal{S}'\), as required.

At the end of Phase~2, the resulting set pair system
\(\mathcal{S}_{\ell}\) satisfies
\[\{(S,H) \in \mathcal{S}_{\ell}: H \neq \emptyset\}
\subseteq \{(S',H') \in \mathcal{S}': H' \neq \emptyset\}\]
and this property is maintained through all subsequent phases.\\

\noindent \underline{Phase 3}: There exist two distinct \((S_1,\emptyset),(S_2,\emptyset) \in \mathcal{S}_{\ell-1}\) and \((S',H) \in \mathcal{S}'\) with
\(S_1 \cup S_2 \subseteq S'\). Then, since Phase~1 is completed,
\(S_1,S_2 \not \in \mathcal{H}(\mathcal{S}_{\ell-1})\).
We put
\[\mathcal{S}_{\ell-1} \vdash_{(3)} \mathcal{S}_{\ell}
= (\mathcal{S}_{\ell-1} - \{(S_1,\emptyset),(S_2,\emptyset)\})
\cup \{(S_1 \cup S_2,\emptyset)\}.\]
Since \(S_1 \cup S_2 \subseteq S'\), we
have \(\mathcal{S}_{\ell} \preceq \mathcal{S}'\), as required.

At the end of Phase~3, the resulting set pair system
\(\mathcal{S}_{\ell}\) satisfies
\(\mathcal{P}(\mathcal{S}_{\ell}) = \mathcal{P}(\mathcal{S}')\)
and this property is maintained through the last phase.\\

\noindent  \underline{Phase 4}:
There exists \(H \in  \mathcal{H}(\mathcal{S}') - \mathcal{H}(\mathcal{S}_{\ell-1})\). Let \((S,H)\), \((\overline{S},H)\) and \((H,\emptyset)\)
be the set pairs in~\(\mathcal{S}'\) that must exist by~(PL3).
Since \(\mathcal{S}_{\ell-1} \prec \mathcal{S}'\) and Phase~3 is completed,
we have \((S,\emptyset),(\overline{S},\emptyset),\)
\((H,\emptyset) \in \mathcal{S}_{\ell-1}\). Moreover, since Phase~1 is completed,
\(S,\overline{S},H \not \in \mathcal{H}(\mathcal{S}_{\ell-1})\).
We put
\[\mathcal{S}_{\ell-1} \vdash_{(4)} \mathcal{S}_{\ell}
= (\mathcal{S}_{\ell-1} - \{(S,\emptyset),(\overline{S},\emptyset)\})
\cup \{(S,H),(\overline{S},H)\}.\]
By construction, we
have \(\mathcal{S}_{\ell} \preceq \mathcal{S}'\), as required.

At the end of Phase~4, the resulting set pair system
\(\mathcal{S}_{\ell}\) satisfies
\(\mathcal{S}_{\ell} = \mathcal{S}'\). This concludes the
construction of the maximal chain~\(\mathfrak{C}\).
Note that all maximal chains in the
interval~\([\mathcal{S},\mathcal{S}']\) satisfying~(MC)
can be obtained by the construction described above.
The number of iterations in each phase is
completely determined by \(\mathcal{S}\) and \(\mathcal{S}'\).
More specifically, the number of iterations~\(k_1\) in
Phase~1 equals \(|\mathcal{H}(\mathcal{S}) - \mathcal{H}(\mathcal{S}')|\), the number of iterations~\(k_2\) in
Phase~2 equals the number of \((S,\emptyset) \in \mathcal{S}\)
such that there exists some \((S',H) \in \mathcal{S}'\)
with \(S \subseteq S'\) and  \(H \in \mathcal{H}(\mathcal{S}) \cap \mathcal{H}(\mathcal{S}')\)
and the number of iterations~\(k_4\) in
Phase~4 equals \(|\mathcal{H}(\mathcal{S}') - \mathcal{H}(\mathcal{S})|\).
The remaining \(k_3=k-(k_1+k_2+k_4)\) iterations are spent in Phase~3. 
\end{proof}

\begin{figure}
\centering
\includegraphics[scale=1.0]{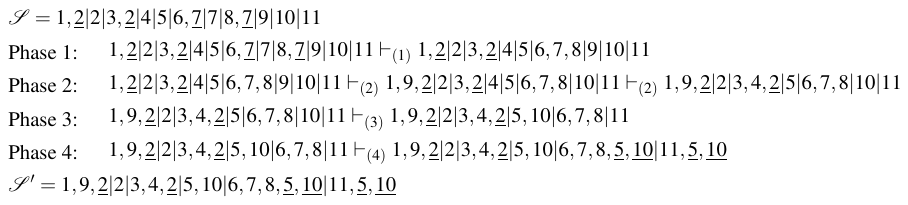}
\caption{An example illustrating the construction
of a maximal chain~\(\mathfrak{C}\) of length~\(k=5\)
with Property~(MC) in the
interval~\([\mathcal{S},\mathcal{S}']\) 
as described in the proof of
Lemma~\ref{lem:max:chain:sorted:by:types}.
In this example we have
\(\mathcal{H}(\mathcal{S}) - \mathcal{H}(\mathcal{S}') = \{\{7\}\}\),
\(\mathcal{H}(\mathcal{S}) \cap \mathcal{H}(\mathcal{S}') = \{\{2\}\}\) and
\(\mathcal{H}(\mathcal{S}') - \mathcal{H}(\mathcal{S}) = \{\{5,10\}\}\).
These sets guide the construction in Phases~1, 2 and 4, respectively.}
\label{fig:mc:property}
\end{figure}

Next we define an edge-labeling~\(\lambda^*\) of \((\mathfrak{P}(X),\preceq)\).
One key aspect is to ensure that any maximal chain in an interval
that is increasing with respect to~\(\lambda^*\) necessarily
satisfies~(MC). In addition, we want to make sure that, among those 
maximal chains in an interval that satisfy~(MC) there is
precisely one that is increasing with respect to~\(\lambda^*\).
The way to achieve the latter is inspired by
the labeling defined in~\cite{wachs1996basis} (see also
\cite[Sec.~3.2.2.]{wachs07})
for the poset of partitions of~\(X\) that was 
illustrated in Figure~\ref{fig:poset:partitions}.
For \((\mathcal{S},\mathcal{S}') \in \mathfrak{E}_{(\mathfrak{P}(X),\preceq)}\), put
\[\lambda^*(\mathcal{S},\mathcal{S}') =
\begin{cases}
\max(S_1 \cup S_2) &\text{if} \ \mathcal{S} \vdash_{(1)} \mathcal{S}',\\[1ex]
n + \max(S_1) &\text{if} \ \mathcal{S} \vdash_{(2)} \mathcal{S}',\\[1ex]
2n + \max(S_1 \cup S_2) &\text{if} \ \mathcal{S} \vdash_{(3)} \mathcal{S}',\\[1ex]
3n + \max(S_3) &\text{if} \ \mathcal{S} \vdash_{(4)} \mathcal{S}'.
\end{cases}\]
By Fact~\ref{fact:edges:p:x} the edge-labeling~\(\lambda^*\)
is well-defined.

As an example, consider the maximal~\(\mathfrak{C}\) of
length~\(5\) in the interval \(\mathfrak{I}=[\mathcal{S},\mathcal{S}']\)
shown in Figure~\ref{fig:mc:property}.
For this chain we obtain the label vector
\(\Lambda_{\lambda^*}(\mathfrak{C}) = (8,20,15,32,43)\).
As can be seen,
\(\mathfrak{C}\) is not increasing with respect to~\(\lambda^*\).
There exists, however, a maximal chain~\(\mathfrak{C}'\)
in the interval~\(\mathfrak{I}\) that can also be
obtained by the construction described in the proof of
Lemma~\ref{lem:max:chain:sorted:by:types}. More
specifically, processing in Phase~2 first~\(4\) and then~\(9\)
we obtain the label
vector \(\Lambda_{\lambda^*}(\mathfrak{C}') = (8,15,20,32,43)\)
and this is the only way to obtain a maximal chain
in~\(\mathfrak{I}\) that is increasing with respect to~\(\lambda^*\).

\begin{theorem}
\label{thm:poset:cactuses:shellable}
The poset \((\mathfrak{P}(X),\preceq)\) is EL-shellable.
\end{theorem}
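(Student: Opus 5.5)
We use the labeling \(\lambda^*\) and verify (EL1) and (EL2) for an arbitrary interval \(\mathfrak{I}=[\mathcal{S},\mathcal{S}']\) with \(\mathcal{S}\prec\mathcal{S}'\). The labels of type-\((j)\) edges lie in \(\{(j-1)n+1,\dots,jn\}\), so labels of different types are separated by type. Hence along any increasing chain the types \(i_1,\dots,i_k\) are weakly increasing, and the chain satisfies (MC). By Lemma~\ref{lem:max:chain:sorted:by:types}, every chain with (MC) has the same type sequence, namely \(k_1\) steps of type~(1), then \(k_2\) of type~(2), then \(k_3\) of type~(3), then \(k_4\) of type~(4). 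The proof of that lemma also says that all such chains arise from the four-phase construction. It therefore suffices to show the following: within each phase, exactly one order of the admissible moves yields strictly increasing labels, and this chain is lexicographically smallest among all maximal chains in \(\mathfrak{I}\).

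We treat the phases separately.

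In Phase~1 the set of \(H\) to be resolved is fixed, namely \(\mathcal{H}(\mathcal{S})-\mathcal{H}(\mathcal{S}')\). Resolving \(H\) contributes the label \(\max(S\cup\overline{S})\), and the sets \(S\cup\overline{S}\) are disjoint for different \(H\). So the labels are distinct, and they are increasing for exactly one order.

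In Phase~2 the moves absorb fixed blocks \((S_2,\emptyset)\) into fixed \((S_1,H)\). The label is \(n+\max(S_1)\), where \(S_1\) is the current first entry and grows. Here we mimic the partition-lattice argument: within each \(H\)-pair the absorbed blocks must come in the order making the current max increase. We expect to have to check carefully that the label is really determined by the newly added block, i.e.\ that it behaves like \(\max\) of the union. If the label is meant as \(\max(S_1\cup S_2)\), it is exactly the partition-lattice labeling restricted to merges into a growing block.

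Phase~3 is a refinement of \(\mathcal{P}\) inside the blocks of \(\mathcal{P}(\mathcal{S}')\) using type-(3) merges labelled \(2n+\max(S_1\cup S_2)\). This is the Wachs labeling of the partition lattice on the product of intervals, so uniqueness and lexicographic minimality come from \cite[Sec.~3.2.2]{wachs07}. Alternatively we reprove them directly: each block \(B\) of \(\mathcal{P}(\mathcal{S}')\) must merge its pieces in increasing order of their maxima into the piece containing the smallest element. Across blocks, the labels are the distinct maxima and are sorted.

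In Phase~4 the labels are \(3n+\max(H)\) for the distinct \(H\in\mathcal{H}(\mathcal{S}')-\mathcal{H}(\mathcal{S})\), and they are disjoint, so exactly one order works.

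Concatenating gives a unique increasing chain \(\mathfrak{C}_{\mathfrak{I}}\), which proves (EL1). The transitions between phases are automatically increasing because of the type offsets.

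For (EL2) we take any other maximal chain \(\mathfrak{C}\) and compare label vectors at the first position where the chains differ. The main obstacle is showing that a chain not following the phase order cannot start with a smaller label. Concretely, at a common element \(\mathcal{T}\) we must show that the first edge of \(\mathfrak{C}_{[\mathcal{T},\mathcal{S}']}\) has the minimum label among all edges \((\mathcal{T},\mathcal{T}')\) with \(\mathcal{T}'\preceq\mathcal{S}'\). This needs two facts. First, if a type-(1) move is possible toward \(\mathcal{S}'\), then every available move of a higher type has a larger label, which is immediate from the offsets. Second, within a type, the minimal label is realised by the canonical move. For type (1), the available moves are exactly resolutions of \(H\in\mathcal{H}(\mathcal{T})-\mathcal{H}(\mathcal{S}')\), since \(H\in\mathcal{H}(\mathcal{S}')\) cannot be resolved below \(\mathcal{S}'\) by (SP2). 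For types (2) and (3), we argue as for partitions that the merge achieving the smallest possible maximum is the canonical one. Finally we rule out equal labels from different edges out of \(\mathcal{T}\), or we handle ties by noting that the canonical continuation is strictly smaller later. We would first attempt the standard reduction that (EL2) follows from (EL1) together with the minimal-first-label property applied inductively on interval length.
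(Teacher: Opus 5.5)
Your overall plan is the paper's: the type offsets force every increasing chain to satisfy (MC), Lemma~\ref{lem:max:chain:sorted:by:types} fixes the four phases, and (EL2) is decided at the first step where a chain leaves the canonical one. Phases~1 and~4 are handled correctly.

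The genuine gap is Phase~2, where you misread \(\lambda^*\). The names in its definition refer to Fact~\ref{fact:edges:p:x}. In case~(2) there, \((S_1,H_1)\) is the pair with \(H_1=\emptyset\), i.e.\ the block being \emph{absorbed}, not the growing first entry; the proof of the lemma swaps the names.

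Under your reading, (EL1) would be false. Take \(n=5\), \(\mathcal{S}=1,\underline{5}|4,\underline{5}|5|2|3\) and \(\mathcal{S}'=1,2,3,\underline{5}|4,\underline{5}|5\). The two (MC) chains (absorb \(2\) then \(3\), or \(3\) then \(2\)) get label vectors \((6,7)\) and \((6,8)\), and both are increasing. Your fallback \(n+\max(S_1\cup S_2)\) also fails: all Phase-2 labels into a block coincide once that block contains the largest element involved. So ``mimic the partition-lattice argument'' cannot be carried out.

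With the correct label, Phase~2 is as easy as Phases~1 and~4:
\begin{itemize}
\item The absorbed blocks are exactly the \(\emptyset\)-blocks of the Phase-1 output lying inside \(A'\) for some \((A',H)\in\mathcal{S}'\) with \(H\in\mathcal{H}(\mathcal{S})\cap\mathcal{H}(\mathcal{S}')\).
\item They are untouched until absorbed and are pairwise disjoint, so the labels \(n+\max(B)\) are distinct and exactly one order increases. In the example the two chains give \((7,8)\) and \((8,7)\).
\item From any \(\mathcal{T}\) in Phase~2, every type-(2) edge staying below \(\mathcal{S}'\) is one of these absorptions, so the canonical edge is the strict minimum there.
\end{itemize}

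Your direct description of Phase~3 is also wrong. Suppose a block \(\beta\) of \(\mathcal{P}(\mathcal{S}')\) currently consists of pieces with maxima \(c_1<\dots<c_m\). Every merge inside \(\beta\) has label \(2n+c_j\) with \(j\ge 2\), and \(2n+c_2\) arises only from merging the two pieces with the smallest maxima. Hence the increasing chain merges those two first and then adds the remaining pieces in increasing order of maxima. For pieces \(\{1,5\},\{2\},\{3\}\), your rule of merging into the piece containing \(1\) yields \((2n+5,2n+5)\), which is not increasing.

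This same observation closes the tie issue you left open in (EL2). Among edges of the canonical step's type, labels can coincide only in Phase~3. There the minimum \(2n+\min_\beta c_2(\beta)\) is attained by exactly one edge, because the values \(c_2(\beta)\) lie in disjoint blocks.

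Finally, (EL2) also needs the paper's \(i\le j\): once the canonical step out of \(\mathcal{T}\) has type \(i\), no edge of smaller type out of \(\mathcal{T}\) stays below \(\mathcal{S}'\). You checked this only for type~(1), and there it is (SP1), not (SP2), that fails. For type~(2) it holds because after Phase~2 every \(H\)-block already equals its target. For type~(3) it holds because after Phase~3 we have \(\mathcal{P}(\mathcal{T})=\mathcal{P}(\mathcal{S}')\).
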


\begin{proof}
By~\cite[Prop.~1]{huber2024space}, \((\mathfrak{P}(X),\preceq)\)
is bounded and all maximal chains have the same length. Hence,
by definition, \((\mathfrak{P}(X),\preceq)\) is a finite, bounded
and graded poset.

It follows immediately from Lemma~\ref{lem:max:chain:sorted:by:types}
and the definition of~\(\lambda^*\) that \(\lambda^*\) satisfies~(EL1).

It remains to show that~\(\lambda^*\) satisfies~(EL2).
Consider \(\mathcal{S},\mathcal{S}' \in \mathfrak{P}(X)\)
with \(\mathcal{S} \prec \mathcal{S}'\). Put
\(\mathfrak{I} = [\mathcal{S},\mathcal{S}']\).
Let \(\mathfrak{C}\) be the unique maximal chain in~\(\mathfrak{I}\)
that is increasing with respect to~\(\lambda^*\). Consider
a maximal chain~\(\mathfrak{C}'\) in~\(\mathfrak{I}\) with
\(\mathfrak{C} \neq \mathfrak{C}'\).
Put \(k=|\mathfrak{C}|-1=|\mathfrak{C'}|-1\). Let
\[\mathcal{S}_0 \vdash \mathcal{S}_1 \vdash \dots \vdash \mathcal{S}_k 
\quad \text{and} \quad
\mathcal{S}'_0 \vdash \mathcal{S}'_1 \vdash \dots \vdash \mathcal{S}'_k\]
denote the elements of \(\mathfrak{C}\) and
\(\mathfrak{C}'\), respectively.
Since \(\mathfrak{C} \neq \mathfrak{C}'\), we must have
\(k \geq 2\). Moreover, there exists
a smallest \(0 \leq \ell \leq k-2\) such that
\(\mathcal{S}_\ell=\mathcal{S}'_\ell\) and
\(\mathcal{S}_{\ell+1} \neq \mathcal{S}'_{\ell+1}\).
Note that \(\mathfrak{C}'\) need not satisfy~(MC), but by
the construction of~\(\mathfrak{C}\) and the choice
of~\(\ell\), we must have
\(\mathcal{S}_\ell \vdash_{(i)} \mathcal{S}_{\ell+1}\) and
\(\mathcal{S}'_\ell \vdash_{(j)} \mathcal{S}'_{\ell+1}\)
with \(i \leq j\).
Thus, by the construction of~\(\mathfrak{C}\) and the
definition of~\(\lambda^*\), we have
\(\lambda^*(\mathcal{S}_{\ell},\mathcal{S}_{\ell+1}) < 
\lambda^*(\mathcal{S}'_{\ell},\mathcal{S}'_{\ell+1})\), 
implying \(\Lambda_{\lambda^*}(\mathfrak{C}) <_{\text{lex}} \Lambda_{\lambda^*}(\mathfrak{C}')\), as required.
\end{proof}

\section{Equidistant cactus space}
\label{sec:conclusion}

As mentioned in the introduction, part of the motivation 
for this paper was to better understand the space of
equidistant cactuses that was defined in~\cite{huber2024space}.
We give an example of an equidistant cactus in
Figure~\ref{fig:ex:equidistant:cactus}.
The space of equidistant cactuses was constructed
as the \emph{orthant space} corresponding to
the order complex of \((\mathfrak{P},\preceq)\)
(see e.g.~\cite{miller2015polyhedral} for more details about
the construction of an orthant space).
More specifically, each orthant in equidistant cactus space corresponds
to a ranked cactus, and 
the coordinates of a point in the orthant
correspond to the weights assigned to the 
vertices in each rank of the ranked cactus.
For example, in Figure~\ref{fig:ex:equidistant:cactus} 
the ranked cactus corresponds to a 5-dimensional orthant, and the 
weights assigned to the vertices gives the point $(0.06,1.72,0.17,0.39,0.23)$ 
in the orthant that corresponds
to the equidistant cactus in the figure.

Since binary ranked cactuses with leaf set~\(X\)
are in bijection with maximal chains in the poset
\((\mathfrak{P},\preceq)\), the order complex
of the poset \((\mathfrak{P},\preceq)\) with the maximum
\(\{(X,\emptyset)\}\) removed is the so-called \emph{scaffold complex}
of the orthant space of equidistant cactuses
(see e.g.~\cite{miller2015polyhedral}).
Note that the \emph{link of the origin} of an orthant space
is a geometric realization of the scaffold complex 
inside the resulting orthant space. For example, in 
\cite{GD16} the link of the origin of ultrametric tree space is considered.
Since the scaffold complex of this space is the order complex 
of the poset of partitions of~\(X\), it follows
(as pointed out previously in~\cite[p.~48]{ard-kli-06a}),
that (the proper part of) this link
has the homotopy type of a wedge of spheres.
We now show that the link of the origin of equidistant cactus space
enjoys the same property.

We start briefly explaining a technical aspect that also applies to
the space of ultrametric trees mentioned above.
As we are considering equidistant cactuses with a fixed set~\(X\)
of leaves, all orthants of maximum dimension in the
space of equidistant cactuses contain the axis that 
corresponds to the weight assigned to the vertices in rank~0, that is,
assigned to the minimum of the poset~\((\mathfrak{P},\preceq)\).
Having arcs of non-zero length at the leaves is a requirement
motivated from the biological application of these networks.
However, from a mathematical point of view it is more interesting
to look at the part of the space of equidistant cactuses
whose scaffold complex is the proper part of~\((\mathfrak{P},\preceq)\)
which we refer as the \emph{proper part} of the link of 
the origin of the whole space of equidistant cactuses. 
As an immediate consequence of Theorem~\ref{thm:poset:cactuses:shellable}
and \cite[Theorem 3.2.4]{wachs07} we obtain
the following topological result.

\begin{corollary}
\label{cor:homotopy:type:link:origin}
The proper part of the link of the origin the space of equidistant cactuses
has the homotopy type of a wedge of spheres.
\end{corollary}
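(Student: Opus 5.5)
The corollary should follow by combining Theorem~\ref{thm:poset:cactuses:shellable} with the standard topological consequences of EL-shellability, together with an identification of the proper part of the link of the origin with the order complex of the proper part of \((\mathfrak{P}(X),\preceq)\). The plan has three steps.

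First, I would make this identification precise. By the construction of equidistant cactus space as an orthant space (cf.~\cite{miller2015polyhedral}), the scaffold complex is the order complex of \((\mathfrak{P}(X),\preceq)\) with the maximum \(\{(X,\emptyset)\}\) removed. Every maximal simplex of this complex contains the vertex corresponding to the minimum \(\{(\{x\},\emptyset):x\in X\}\), since every maximal chain passes through it. Deleting this vertex, which corresponds to the rank-\(0\) axis shared by all maximal orthants, leaves exactly \(\Delta(\overline{\mathfrak{P}(X)})\). The link of the origin is a geometric realization of the scaffold complex. So its proper part, which by definition is the part whose scaffold complex is the proper part of the poset, is homeomorphic to a geometric realization of \(\Delta(\overline{\mathfrak{P}(X)})\).

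Second, I would invoke Theorem~\ref{thm:poset:cactuses:shellable}: \((\mathfrak{P}(X),\preceq)\) is finite, bounded, graded and EL-shellable. By \cite[Theorem 3.2.4]{wachs07} (Björner--Wachs), EL-shellability of a bounded poset implies that the order complex of its proper part is shellable. A pure shellable simplicial complex of dimension \(d\) has the homotopy type of a wedge of \(d\)-spheres, where the number of spheres equals the number of increasing maximal chains. Here \(d=n-3\), since maximal chains have length \(n-1\) and the two endpoints are removed.

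Third, I would treat the degenerate cases so the statement holds uniformly. For \(n\le 2\) the proper part is empty or trivial, and one adopts the usual convention that the empty complex is a wedge of \((-1)\)-spheres.

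The only step needing care is the first, namely confirming that the geometric identification of the proper part of the link with \(\Delta(\overline{\mathfrak{P}(X)})\) is a homeomorphism, or at least a homotopy equivalence. I would handle it by working directly with the definition of the orthant space via its scaffold complex, so that no further geometric argument is needed. Since homotopy type is preserved under homeomorphism, the result then follows.
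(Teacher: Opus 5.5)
Your proposal is correct and follows the paper's route. The proper part of the link is, by construction, a geometric realization of $\Delta(\overline{\mathfrak{P}(X)})$ (the scaffold complex is a cone over it with apex the minimum), and Theorem~\ref{thm:poset:cactuses:shellable} together with \cite[Theorem 3.2.4]{wachs07} then gives the wedge of spheres.

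One slip, which does not affect the corollary: the number of spheres equals the number of maximal chains with weakly decreasing label sequence, not increasing ones. By (EL1) there is exactly one increasing maximal chain, so your count would always predict a single sphere. Already for $n=3$ the proper part consists of six points, which is a wedge of five $0$-spheres.
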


Note that \cite[Theorem 3.2.4]{wachs07} also asserts that
the number of spheres in the wedge equals the number
of weakly decreasing maximal chains in the underlying
poset. Therefore, it would be interesting to give a formula
for this number.

\begin{figure}
\centering
\includegraphics[scale=1.0]{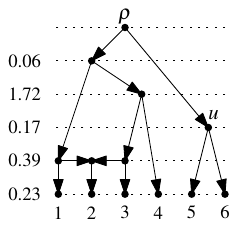}
\caption{An equidistant cactus obtained from the binary
ranked cactus in Figure~\ref{fig:example:binary:cactus}
by assigning a non-negative
real-valued weight to each rank (except the rank of the root~\(\rho\)). This
yields a non-negative distance of each vertex from~\(\rho\) by summing
the weights of all ranks upward from the vertex. Vertex~\(u\), for example,
has distance~\(0.17+1.72+0.06=1.95\) from~\(\rho\). All leaves
have the same distance from~\(\rho\), hence the name equidistant cactus.}
\label{fig:ex:equidistant:cactus}
\end{figure}

We conclude by noting that in~\cite{moulton2025spaces} a space of
\emph{ranked tree-child networks} is introduced which can be
thought of as a generalization of the space of equidistant cactuses. 
An encoding of ranked tree-child networks is described in~\cite{moulton2025spaces} that is based also 
on a poset \((\mathfrak{T}(X),\preceq)\)
whose ground set consists of collections of subsets of~\(X\).
We remark that the poset \((\mathfrak{P}(X),\preceq)\) can be 
isomorphically embedded into \((\mathfrak{T}(X),\preceq)\) by
mapping \(\mathcal{S} \in \mathfrak{P}(X)\) to
\(\{S \cup H : (S,H) \in \mathcal{S}\} - \mathcal{H}(\mathcal{S})\).
With this in mind it would be interesting to know whether or not 
the poset \((\mathfrak{T}(X),\preceq)\) is also shellable.

\subsubsection*{Acknowledgments}
V. Moulton and A. Spillner thank the organizers of the
workshop ``New Directions in Experimental Mathematics''
in Bielefeld 2025 where they started to
discuss the shellability of the poset~\((\mathfrak{P}(X),\preceq)\).
Part of this work was funded by the Land Sachsen-Anhalt and
the European Union through the European Social Fund Plus
(ESF+), grant no. ZS/2024/10/191230.

\bibliographystyle{plain}
\bibliography{network}

\end{document}